\newcommand\sC{{\mathcal C}}
\newcommand\sT{{\mathcal T}}
\newcommand\sF{{\mathcal F}}
\newcommand\sZ{{\mathcal Z}}
\newcommand\sW{{\mathcal W}}
\newcommand\sX{{\mathcal X}}
\newcommand\la{\lambda}
\newcommand\Lam{\Lambda}
\newcommand\al{\alpha}
\newcommand\Ga{\Gamma}
\newcommand\De{\Delta}
\newcommand\ga{\gamma}
\newcommand\de{\delta}
\DeclareMathOperator{\Mat}{Mat}
\newcommand{\CC}{\ensuremath{\mathbb{C}}}
\newcommand{\RR}{\ensuremath{\mathbb{R}}}
\newcommand{\ZZ}{\ensuremath{\mathbb{Z}}}
\newcommand{\hol}{\ensuremath{\mathcal{O}}}
\newcommand{\PP}{\ensuremath{\mathbb{P}}}
\newcommand{\ra}{\ensuremath{\rightarrow}}
\def\eea{\end{eqnarray*}}
\def\bea{\begin{eqnarray*}}
\newcommand\dual{\mathrel{\raise3pt\hbox{$\underline{\mathrm{\thinspace d
\thinspace}}$}}}
\newcommand\qe{\ifhmode\unskip\nobreak\fi\quad $\Box$}       
\def\BOX{\hfill\lower.5\baselineskip\hbox{$\Box$}}
\newtheorem{theorem}[equation]{Theorem}
\newtheorem{theo}[equation]{Theorem}
\newtheorem{remark}[equation]{Remark}
\newtheorem{defin}[equation]{Definition}
\newtheorem{prop}[equation]{Proposition}
\newtheorem{lemma}[equation]{Lemma}
\newtheorem{example}[equation]{Example}
\newcommand{\sR}{\ensuremath{\mathcal{R}}}
\begin{document}

\title[Orbifolds and Quotients of Tori]{Orbifold Classifying Spaces and Quotients of complex Tori}
\author{ Fabrizio  Catanese }
\address{Lehrstuhl Mathematik VIII, Mathematisches Institut der Universit\"{a}t
Bayreuth, NW II, Universit\"{a}tsstr. 30,
95447 Bayreuth, and  Korea Institute for Advanced Study, Hoegiro 87, Seoul, 
133-722, Korea}
\email{Fabrizio.Catanese@uni-bayreuth.de}

\thanks{AMS Classification:  32Q15, 32Q30, 32Q55, 14K99, 14D99, 20H15, 20K35.\\ 
Key words: Cristallographic groups, group actions on tori, Orbifold fundamental groups, Orbifold classifying spaces.\\ }

\date{\today}

\maketitle

Dedicated to  Gianfranco Casnati, in memoriam, a dear  friend and  a valuable collaborator.

\begin{abstract}
In this paper we 
characterize  the quotients $ X = T/G$
of a complex torus $T$ by the  action of a finite group $G$
as   the K\"ahler orbifold classifying spaces of the even  Euclidean cristallographic groups $\Ga$,
and we prove other similar and stronger  characterizations.
\end{abstract}

\tableofcontents

\section*{Introduction}

Complex tori are the simplest compact K\"ahler manifolds, they are the quotients $T = \CC^n / \Lambda$
where $\Lambda \cong \ZZ^{2n}$ is a discrete subgroup of maximal rank $= 2n$.

To give the flavour of the results of this paper, we observe that complex tori are the cKM: = compact K\"ahler 
Manifolds $X$ which are $K(\ZZ^{2n},1)$ spaces, that is, classifying spaces for the group $\ZZ^{2n}$ (Corollary
82 of \cite{topmethods} shows more generally that they are exactly the compact K\"ahler 
manifolds $X$ which are classifying spaces of a non trivial  Abelian group).
For non experts, this means that the fundamental group $\pi_1(X) \cong \ZZ^{2n}$
and the universal covering $\tilde{X}$ of $X$ is a contractible topological space.

This follows from a stronger result, Prop. 4.8 of \cite{nankai} (= Prop.2.9 of \cite{cat04}) showing  that a compact complex Manifold $X$
 whose  integral cohomology algebra $H^*(X, \ZZ)$  is  isomorphic to the exterior algebra $\Lambda^*(\ZZ^{2n})$ is a  torus if and only if 
 $X$  possesses $n$ independent closed holomorphic 1-forms. If $X$ is projective, then the first property alone suffices to
 guarantee that $X$ is an Abelian variety (a complex tours which is a projective manifold).
 
 The   Generalized Hyperelliptic Manifolds, which we shall  here simply call Hyperelliptic Manifolds,
 are the quotients $X = T/G$ of a complex torus $T$ by the action of  a finite 
group $G$ acting freely (that is, no transformation in the group $G$ has fixed points), and not consisting only of translations.

 In  dimension 2,  these manifolds were introduced and classified by
 Bagnera and de Franchis and Enriques and Severi (\cite{BdF} and  \cite{Enr-Sev}.

 Recall also that  a $K(\Ga, 1)$ manifold is a manifold $M$ such that its universal covering is contractible, and such that $\pi_1(M) \cong \Ga$.
 
 In \cite{cat-corv} it was shown that the 
 Hyperelliptic Manifolds $X$ are the cKM which are classifying spaces for  torsion free  even Euclidean cristallographic groups $\Ga$, thus describing explicitly their 
 Teichm\"uller spaces (Theorem 81 of \cite{topmethods} uses a weaker assumption, similar to the one described above for complex tori, but does not describe the fundamental groups $\Ga$).
 
 The main purpose of this note is to extend these results to quotients $X= T/G$ by groups $G$ which are not acting freely.
 
  In the case where the action of $G$ is {\bf quasi-free}, namely, $G$ acts freely outside of a closed algebraic set
 of codimension at least $2$, this works simply by considering  the normal complex space $X$; but, in the case where the set $\Sigma$ of points $x$ whose stabilizer is nontrivial has codimension $1$, we must replace $X$ by the complex orbifold $\sX$
 consisting of $X$ and of the irreducible divisors $D_1, \dots, D_r$, whose union is the divisorial part of the image of $\Sigma$, each  marked with the integer $m_i$ which is the multiplicity of ramification over $D_i$
 (i.e.,  the ramification index is then $m_i-1$).
 
 As described in different ways in  \cite{d-m}, \cite{isogenous}, \cite{cime}, \cite{topmethods}, \footnote{ \cite{d-m} Section 14, \cite{isogenous} definition 4.4 and Proposition 4.5, pages 25-26, \cite{cime} definition 5.5, 
 Proposition 5.8, pages 101-102, \cite{topmethods} section 6.1, pages 316-318.} we replace the fundamental group
 $\pi_1(X)$ by the Orbifold fundamental group $\Ga: = \pi_1^{orb}(\sX)$, which in the quasi-free case
 is the fundamental group $\pi_1(X^*)$ of the smooth locus $X^*$ of $X$, while in general $\pi_1^{orb}(\sX)$
 can be described as the group of lifts of the transformations of the group $G$ to the universal cover $\tilde{T}$
 of $T$.
 
 In our special case $\Ga$ is a properly discontinuous group of affine transformations of $\CC^n$, 
 a so-called complex cristallographic group (\cite{bieb1,bieb2}, see also \cite{cat-corv}).
 
 We have several options for the assumptions to be made, for instance this is a first result, in the projective case:
 
 \begin{theorem}\label{proj}
 Finite quotients of complex Abelian varieties are: 
 
 (i) the Deligne-Mostow projective orbifolds
 which are orbifold classifying spaces for even Euclidean cristallographic groups $\Ga$,
 
or  more generally 
 
 (ii) the complex  projective orbifolds with KLT singularities 
 which are orbifold classifying spaces for even Euclidean cristallographic groups $\Ga$.
 
 \end{theorem}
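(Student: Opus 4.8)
The plan is to prove the two characterizations together, by establishing the inclusion ``quotients $\subseteq$ orbifolds of type (i)'' and the inclusion ``orbifolds of type (ii) $\subseteq$ quotients''; since a finite quotient of a smooth variety has only quotient singularities, which are klt, and list (i) is contained in list (ii), this yields the theorem.

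For the first inclusion I would start from $X = A/G$ with $A$ a complex Abelian variety and $G$ finite acting holomorphically. First use that every holomorphic automorphism of $A$ is affine, so that lifting the $G$-action to the universal cover $\CC^n$ of $A$ produces an extension $1 \to \Lambda \to \Gamma \to G \to 1$ with $\Lambda = \pi_1(A) \cong \ZZ^{2n}$ acting by translations and $\Gamma$ a group of affine transformations of $\CC^n$, properly discontinuous and cocompact with $\CC^n/\Gamma = X$. Averaging a Hermitian metric over $G$ puts the linear parts of $\Gamma$ inside $U(n) \subset SO(2n)$, so $\Gamma$ is an even Euclidean crystallographic group; and, by the description of the orbifold fundamental group recalled in the Introduction, $\Gamma = \pi_1^{orb}(\sX)$, where $\sX$ is $X$ equipped with its canonical ramification markings. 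Since $A$ is projective and $G$ finite, $\sX$ is a projective Deligne-Mostow orbifold with (quotient, hence klt) singularities, and its orbifold universal cover $\CC^n$ is contractible; hence $\sX$ is an orbifold classifying space for $\Gamma$. This direction is essentially bookkeeping: the only delicate points are the identification of $\pi_1^{orb}$ with the group of affine lifts and the properness of the $\Gamma$-action.

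For the converse the plan is descent from a torsion-free cover. Let $\sX$ be a complex projective orbifold with klt singularities which is an orbifold classifying space for an even Euclidean crystallographic group $\Gamma$; in particular $\sX$ is developable, its orbifold universal cover $\widetilde{\sX}$ is contractible, and $\Gamma = \pi_1^{orb}(\sX)$ acts on $\widetilde{\sX}$ with $\widetilde{\sX}/\Gamma = \sX$. By Bieberbach's first theorem (\cite{bieb1,bieb2}), $\Gamma$ has a normal, finite-index translation subgroup $\Lambda \cong \ZZ^{2n}$, with $G := \Gamma/\Lambda$ the finite holonomy group. Pass to the corresponding degree-$|G|$ Galois orbifold cover $\sX' \to \sX$, so that $\pi_1^{orb}(\sX') = \Lambda$, the group $G$ acts on $\sX'$, and $\sX = [\sX'/G]$. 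Since $\Lambda$ is torsion-free it meets every (finite) isotropy group of the $\Gamma$-action on $\widetilde{\sX}$ trivially, hence acts freely; therefore $\sX' = \widetilde{\sX}/\Lambda$ is a genuine compact complex manifold, it is a $K(\ZZ^{2n},1)$ manifold because $\widetilde{\sX}$ is contractible, and it is projective, being a finite cover of the projective variety underlying $\sX$. Now apply the rigidity recalled in the Introduction --- Prop. 2.9 of \cite{cat04} ($=$ Prop. 4.8 of \cite{nankai}), or Corollary 82 of \cite{topmethods} --- by which a smooth projective manifold whose integral cohomology ring is $\Lambda^*(\ZZ^{2n})$, in particular a smooth projective $K(\ZZ^{2n},1)$, is an Abelian variety; thus $\sX' = A$ is an Abelian variety. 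Finally, since $\Lambda \triangleleft \Gamma$, the finite group $G$ acts holomorphically on $A$ and $\sX = [A/G]$, so the underlying projective variety of $\sX$ is $A/G$, a finite quotient of an Abelian variety, as wanted.

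The main obstacle, exactly as in the smooth (hyperelliptic) case of \cite{cat-corv}, is that this return from topology to geometry is not formal: the assertion that an aspherical smooth projective manifold with fundamental group $\ZZ^{2n}$ must be a torus fails for general aspherical manifolds and genuinely rests on Hodge theory, through the cited results of \cite{cat04} and \cite{topmethods}. A secondary point requiring care is to confirm that the hypothesis of being an orbifold classifying space does supply a developable orbifold with contractible manifold universal cover, so that the $\Lambda$-cover exists, is a manifold, and the freeness argument goes through; this is where the klt assumption does its work in (ii), excluding non-developable or otherwise pathological orbifold structures and forcing the singularities of $X$ to be of quotient type, in agreement with the output of the forward construction. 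Once these two points are settled, the final descent along the Galois cover $A = \sX' \to \sX$ is immediate.
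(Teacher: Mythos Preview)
Your forward direction agrees with the paper's and is fine. The converse also begins the same way: both you and the paper pass to the orbifold cover $Y=\sX'$ associated to the translation lattice $\Lambda\lhd\Gamma$, and both observe (you implicitly, the paper via its Lemma) that $Y$ carries no marked divisors because $\Lambda$ is torsion free.

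The gap is your next step: you assert that $\sX'=\tilde{\sX}/\Lambda$ is a \emph{smooth} manifold, and then invoke the rigidity theorem for smooth projective $K(\ZZ^{2n},1)$'s. The freeness of the $\Lambda$-action only gives smoothness of $\sX'$ if $\tilde{\sX}$ itself is smooth, and this is exactly what is not available. In case~(ii) the klt hypothesis does \emph{not} force the singularities of $X$ to be quotient singularities (this fails already in dimension~$3$), so the orbifold universal cover need not be a manifold; your parenthetical that ``this is where the klt assumption does its work'' misidentifies its role. Even in case~(i), the paper only concludes that $Y$ is a normal space with quotient (hence rational) singularities, not that it is smooth: the D--M charts on $Y$ may well be of the form $\CC^n/H$ with $H$ acting without pseudoreflections but nontrivially in codimension~$\geq 2$. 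Since the rigidity result you cite (Prop.~4.8 of \cite{nankai}) is stated for compact complex \emph{manifolds}, you cannot apply it to $Y$ directly.

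The paper circumvents this by never assuming $Y$ smooth. It uses klt (respectively D--M) only to ensure $Y$ has \emph{rational} singularities, passes to a resolution $Y'\to Y$, deduces $H^1(Y',\ZZ)\cong H^1(Y,\ZZ)\cong\ZZ^{2n}$ from rationality, and then runs the Albanese argument on $Y'$: the Albanese map $\alpha'\colon Y'\to T$ factors through a degree-one map $\alpha\colon Y\to T$, and any contracted divisor would have trivial cohomology class, contradicting K\"ahlerness of $Y'$. In effect the paper re-proves the torus rigidity theorem in this mildly singular setting rather than quoting it as a black box; this is precisely the extra work your sketch is missing.
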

 
 In order to deal more generally with quotients of complex tori we need to use a K\"ahler assumption
 (since there are compact complex manifolds diffeomorphic to tori which are not complex tori,
see  \cite{sommese}, based on ideas introduced in \cite{blanchard}).
 
 The concept of a K\"ahler complex space was introduced by Grauert in \cite{grauert}:
 it means that it has a closed real form of type $ (1,1)$
 which at each point is induced from a positive definite one on the Zariski tangent space.

  Fujiki in \cite{fujiki-aut}, see also
 \cite{fujiki-nag} and \cite{ueno}   introduced the concept of a 
 complex space 
 in the class $\sC$, which means that $X$  is dominated by a holomorphic surjective map from a
 K\"ahler space $X'$, equivalently, from a cKM $X'$. It was shown by Varouchas \footnote{Thanks to Thomas Peternell for providing the exact reference of this assertion, stated without further ado in \cite{campana}.} \cite{varouchas1}, \cite{varouchas2},  that this is equivalent to
 requiring that the complex space $X$  is bimeromorphic to a K\"ahler manifold.
 
  \begin{theorem}\label{kahler}
 Finite quotients of complex tori  are: 
 
 (i) the Deligne-Mostow  orbifolds
 which are orbifold classifying spaces for even Euclidean cristallographic groups $\Ga$,
 and  are moreover bimeromorphic  to a  K\"ahler manifold
 
or  more generally 
 
 (ii) the complex   orbifolds with KLT singularities 
 which are orbifold classifying spaces for even Euclidean cristallographic groups $\Ga$,
 and moreover are bimeromorphic  to a  K\"ahler manifold.
 
 \end{theorem}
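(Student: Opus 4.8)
The plan is to establish the two opposite inclusions; since a Deligne--Mostow orbifold has finite quotient singularities and is in particular a complex orbifold with KLT singularities, it suffices to prove: \textbf{(A)} every finite quotient $X = T/G$ of a complex torus $T$, endowed with its natural orbifold structure $\sX$, is a Deligne--Mostow orbifold which is an orbifold classifying space for an even Euclidean cristallographic group and is bimeromorphic to a K\"ahler manifold; and \textbf{(B)}, conversely, every complex orbifold $\sX$ with KLT singularities which is an orbifold classifying space for an even Euclidean cristallographic group $\Ga$ and is bimeromorphic to a K\"ahler manifold is isomorphic, as a complex orbifold, to a finite quotient of a complex torus. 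Part \textbf{(A)} is easy: $X = T/G$ has quotient singularities, hence is KLT, and $\sX$ is in fact a Deligne--Mostow orbifold, globally uniformized by $T$ itself; fixing a $G$-invariant flat K\"ahler metric on $T$ and pulling it back to $\tilde T = \CC^n$, the group $\Ga = \pi_1^{orb}(\sX)$ of lifts of the transformations of $G$ to $\CC^n$ becomes a discrete cocompact subgroup of $\CC^n \rtimes U(n) \subset \mathrm{Isom}(\RR^{2n})$ preserving the standard complex structure, hence an even Euclidean cristallographic group whose universal orbifold cover $\CC^n$ is contractible, so that $\sX$ is its orbifold classifying space; and since $T$ is compact K\"ahler and $T \to X$ is surjective and holomorphic, $X$ lies in Fujiki's class $\sC$, i.e.\ (by Varouchas, \cite{varouchas1,varouchas2}) is bimeromorphic to a compact K\"ahler manifold.

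For the substantial direction \textbf{(B)}, I would first apply Bieberbach's structure theory to the Euclidean cristallographic group $\Ga$: let $\Lambda \cong \ZZ^{2n}$ be its translation subgroup, which is free abelian, normal and of finite index, and set $G := \Ga/\Lambda$, a finite group; since $\Ga$ is even, $2n$ is its Euclidean dimension and $n = \dim_\CC \sX$. Let $W$ denote the universal orbifold covering of $\sX$. As $\sX$ is an orbifold classifying space for $\Ga$, $W$ is contractible, and by developability of the orbifolds under consideration (cf.\ \cite{d-m,isogenous,cime,topmethods}) $W$ is a complex manifold carrying a properly discontinuous, cocompact holomorphic $\Ga$-action with $W/\Ga = X$. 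Because $\Lambda$ is torsion free it acts freely on $W$ (any element of $\Lambda$ fixing a point would generate a finite subgroup of a point stabilizer, hence be trivial), so $Y := W/\Lambda$ is a compact complex manifold with universal cover $W$ and $\pi_1(Y) = \Lambda \cong \ZZ^{2n}$; thus $Y$ is a $K(\ZZ^{2n},1)$, whence $H^*(Y,\ZZ) \cong \Lambda^*(\ZZ^{2n})$ and $b_1(Y) = 2n$. Moreover $G$ acts holomorphically on $Y$ with $Y/G = X$, and since $\Lambda$ is torsion free the ramification of $Y \to X$ over each marked divisor $D_i$ equals exactly $m_i$, so that the quotient orbifold $(Y,G)$ coincides with $\sX$.

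It then remains to prove that $Y$ is a complex torus. Because $\sX$ is bimeromorphic to a K\"ahler manifold, $X$ lies in class $\sC$; pulling back along the finite surjective morphism $Y \to X$ a K\"ahler space dominating $X$, and normalizing a component dominating $Y$, one sees that $Y$, being a smooth compact complex manifold finite over a space in class $\sC$, again lies in class $\sC$ (stability of the class $\sC$ under finite surjective morphisms, \cite{varouchas1,varouchas2,fujiki-nag}). Hence $Y$ satisfies Hodge symmetry and all its holomorphic $1$-forms are closed, so from $b_1(Y) = 2n$ we get that $Y$ carries $n$ linearly independent closed holomorphic $1$-forms; combined with $H^*(Y,\ZZ) \cong \Lambda^*(\ZZ^{2n})$, Proposition 4.8 of \cite{nankai} (= Proposition 2.9 of \cite{cat04}) shows that $Y$ is a complex torus. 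Therefore $\sX = Y/G$ is a finite quotient of a complex torus, proving \textbf{(B)}; part (ii) is reduced to part (i) by observing that a KLT orbifold which is an orbifold classifying space for a cristallographic group is developable with smooth universal orbifold cover, hence has quotient singularities and is a Deligne--Mostow orbifold.

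I expect the main difficulty to be twofold. First, the set-up of the orbifold formalism in the KLT case: one must verify that the universal orbifold cover $W$ genuinely exists as a complex manifold and that the torsion-freeness of $\Lambda$ yields a smooth quotient $Y$ with the correct marked divisors $D_i$ of multiplicities $m_i$ --- this is precisely where the hypotheses ``KLT'' and ``orbifold classifying space'' must be combined, presumably to reduce (ii) to (i). Second, the passage from $X \in \sC$ to $Y \in \sC$ across the finite (and ramified) cover $Y \to X$: this is the one point at which the K\"ahler-type assumption is genuinely used, and it cannot be dispensed with, in view of the examples of \cite{sommese,blanchard} of non-tori diffeomorphic to tori recalled in the Introduction.
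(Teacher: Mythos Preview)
Your easy direction \textbf{(A)} and the overall architecture of \textbf{(B)} (pass to the $\Lambda$-cover $Y$, show $Y$ is a torus, conclude) match the paper. The gap is in the heart of \textbf{(B)}: you assert that the universal orbifold cover $W$ is a complex \emph{manifold} ``by developability'', and hence that $Y=W/\Lambda$ is a smooth compact complex manifold to which Proposition~4.8 of \cite{nankai} applies directly. But nothing in the hypotheses guarantees this. In the paper's set-up the universal orbifold cover is the Grauert--Remmert extension of a covering of $X\setminus D$; it is a normal complex space, and the classifying-space condition only says it is contractible with trivial orbifold marking, not that it is nonsingular. Concretely, the local isotropy group $G_z$ at a point of $X$ need not inject into $\Gamma$, so the preimage of a neighbourhood of $z$ in $W$ can still carry a (codimension~$\ge 2$) quotient singularity. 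Your last sentence, reducing (ii) to (i) by claiming that a KLT orbifold classifying space for a cristallographic group automatically has smooth universal cover and hence is Deligne--Mostow, is exactly the conclusion of the theorem (once $Y$ is shown to be a torus, the universal cover is $\CC^n$), so invoking it as an input is circular.

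The paper avoids this altogether by never assuming $Y$ is smooth. Its Lemma shows only that the orbifold structure on $Y$ is trivial (all multiplicities $=1$); then, using KLT $\Rightarrow$ KLT under finite covers (\cite{k-m}, Prop.~5.20) $\Rightarrow$ rational singularities, it takes a resolution $Y'\to Y$, uses the classifying-space hypothesis to get $H^1(Y',\ZZ)\cong H^1(Y,\ZZ)\cong\ZZ^{2n}$, and forms the Albanese map $\alpha':Y'\to T=\Alb(Y')$, which factors through a birational $\alpha:Y\to T$ that is moreover a homotopy equivalence. Finiteness of $\alpha$ (hence $Y\cong T$) is then forced by a K\"ahler argument: a contracted divisor would have trivial class in $H^2(Y,\ZZ)\cong H^2(T,\ZZ)$, so its pullback to the K\"ahler manifold $Y'$ would be homologically trivial, a contradiction. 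Your appeal to Proposition~4.8 of \cite{nankai} would give a clean shortcut \emph{if} $Y$ were known to be smooth, but as written that step is exactly what is missing.
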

 
  \begin{theorem}\label{3}
  In Theorems \ref{proj} and \ref{kahler} one can replace the assumption that they are orbifold classifying spaces
  by the conditions:
  
  \begin{enumerate}
  \item
  Their orbifold fundamental group is an  even Euclidean cristallographic group $\Ga$,
  \item
  the integral cohomology algebra of the orbifold covering $Y$ associated to the subgroup $\Lambda < \Ga$ is
  a free exterior algebra $\Lambda^* (\ZZ^{2n})$
   
  \end{enumerate}
  
   \end{theorem}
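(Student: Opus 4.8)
The plan is to show that conditions (1) and (2) together are equivalent to the hypothesis that the orbifold $\sX$ is an orbifold classifying space $K(\Ga,1)$ for an even Euclidean crystallographic group $\Ga$, after which Theorems \ref{proj} and \ref{kahler} apply verbatim. The nontrivial direction is (1)+(2) $\Rightarrow$ orbifold classifying space; the converse is immediate, since for a $K(\Ga,1)$ orbifold the orbifold fundamental group is $\Ga$ by definition, and the orbifold covering $Y$ associated to the maximal translation lattice $\Lambda\lhd\Ga$ is then a $K(\Lambda,1)=K(\ZZ^{2n},1)$, hence has the exterior cohomology algebra. So I focus on the forward implication.

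First I would unwind the group theory of an even Euclidean crystallographic group $\Ga$: by definition $\Ga$ sits in an extension $1\to\Lambda\to\Ga\to G\to 1$ with $\Lambda\cong\ZZ^{2n}$ of finite index and $G=\Ga/\Lambda$ finite, and ``even'' means that the linear parts lie in $\mathrm{SL}$, equivalently act preserving an orientation (in the presence of a complex structure, holomorphically). The key structural input, which I would quote from \cite{bieb1,bieb2} and \cite{cat-corv}, is that $\Lambda$ is the unique maximal normal abelian subgroup (the ``translation subgroup''), so $\Lambda$ is canonically determined by $\Ga$; thus the orbifold covering $Y$ in condition (2) is well defined and carries a free $G$-action with quotient (orbifold) $\sX$. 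By condition (1) and the orbifold covering theory recalled in the Introduction (the footnoted references to \cite{d-m}, \cite{isogenous}, \cite{cime}, \cite{topmethods}), $\pi_1^{orb}(\sX)\cong\Ga$, and $Y$ is a genuine compact complex space (a manifold, in fact) with $\pi_1(Y)\cong\Lambda\cong\ZZ^{2n}$, equipped with a free holomorphic $G$-action whose quotient orbifold is $\sX$.

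Next I would apply the torus-characterization theorem to $Y$. In the projective case, condition (2) says $H^*(Y,\ZZ)\cong\Lambda^*(\ZZ^{2n})$, and since $Y$ is projective (being finite over a projective $\sX$, or because in Theorem \ref{proj} we are in the projective setting) we may invoke the quoted result — Prop. 4.8 of \cite{nankai} together with the projective strengthening mentioned in the Introduction — to conclude that $Y$ is an abelian variety; in the K\"ahler case, the bimeromorphy-to-K\"ahler hypothesis on $\sX$ passes to $Y$ (a finite cover of a space in class $\sC$ is in class $\sC$), so $Y$ is a compact complex manifold in class $\sC$ with exterior integral cohomology; one then needs the $n$ independent closed holomorphic $1$-forms to invoke Prop. 4.8 of \cite{nankai}. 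I would obtain these from the Hodge-theoretic fact that $H^1(Y,\CC)$ has the right dimension $2n$ and, for a compact K\"ahler manifold (and more generally a manifold in class $\sC$), $b_1$ even with $H^1=H^{1,0}\oplus H^{0,1}$ and holomorphic $1$-forms closed; the exterior-algebra structure forces the cup product $\Lambda^*H^1\to H^*$ to be injective, which is exactly what is needed to run the argument of \cite{nankai}. Hence $Y$ is a complex torus $T$.

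Finally, the free holomorphic $G$-action on $T=Y$ lifts to an action by affine transformations on the universal cover $\CC^n$ — indeed the group of all such lifts is precisely $\Ga$, realized now as a complex crystallographic group of affine transformations of $\CC^n$ — and $\sX = T/G$ with $\sX$ carrying exactly the orbifold structure (the divisors $D_i$ with multiplicities $m_i$) dictated by the ramification of $T\to T/G$. Therefore $\sX$ is a finite quotient of a complex torus (resp. abelian variety), and, since $\widetilde T=\CC^n$ is contractible, $\sX$ is by construction the orbifold classifying space $K(\Ga,1)$. This places $\sX$ under the hypotheses of Theorems \ref{proj} and \ref{kahler}, completing the identification. \emph{The main obstacle} I anticipate is the K\"ahler case of the middle step: passing from ``$Y$ has exterior integral cohomology and lies in class $\sC$'' to ``$Y$ admits $n$ independent closed holomorphic $1$-forms,'' i.e. ruling out the non-K\"ahler complex structures on a manifold diffeomorphic to a torus (the Blanchard–Sommese phenomenon cited in the Introduction) — this is where the bimeromorphy-to-K\"ahler hypothesis must be used essentially, via Hodge theory for compact manifolds in class $\sC$, rather than merely formally.
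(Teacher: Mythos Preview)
Your proposal contains a genuine gap: you assert that the orbifold covering $Y$ associated to $\Lambda\lhd\Ga$ is a \emph{manifold} carrying a \emph{free} $G$-action. Neither holds in general. The Lemma in the paper shows only that $Y$ carries no marked divisors of multiplicity $\geq 2$; $Y$ is still a normal complex space which is typically singular (quotient singularities in the Deligne--Mostow case, KLT singularities in case (ii)). And the $G$-action on $Y$ is precisely \emph{not} free whenever the orbifold structure on $\sX$ is nontrivial: the fixed locus of $G$ on $Y$ is exactly what produces the branch divisors $D_i$ and the singularities of $X$. Since Proposition 4.8 of \cite{nankai} is a statement about compact complex manifolds, you cannot apply it directly to $Y$.

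The paper circumvents this by never invoking the torus characterization on $Y$ itself. Instead it passes to a resolution (or K\"ahler model) $Y'\to Y$, uses that rational singularities give $H^1(Y',\ZZ)\cong H^1(Y,\ZZ)\cong\ZZ^{2n}$, and runs the Albanese argument: the Albanese map $\alpha':Y'\to T=\Alb(Y')$ factors through a birational $\alpha:Y\to T$ (degree $1$ via the top cohomology, which condition (2) supplies), and one excludes a contracted divisor using K\"ahlerness of $Y'$, forcing $\alpha$ to be an isomorphism by Zariski's Main Theorem. Your strategy of reducing to Theorems \ref{proj} and \ref{kahler} by first proving $Y$ is a torus is fine in outline, but the actual identification $Y\cong T$ must go through this resolution-plus-Albanese route rather than a direct appeal to \cite{nankai} on $Y$.
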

 
 We were inspired by the recent preprint \cite{gkp}, which established a characterization for the  quasi-free 
 case under the assumptions of homotopy equivalence to such a torus quotient, KLT singularities, and 
 being bimeromorphic  to a  K\"ahler manifold.

  \section{Complex orbifolds, Deligne-Mostow orbifolds, orbifold fundamental groups, orbifold coverings}

\begin{defin} (compare 5.5 in \cite{cime}, and section 4 of \cite{d-m})

 Let  $Z$ be a normal complex space, let $D$ be a closed analytic set
of $Z$ and  let $D_1, \dots, D_r$ be the irreducible components of $D$ of codimension $1$ in the case where $D$ is compact, else we can also allow the set $\{ D_i\}$ to be infinite (but shall keep the former notation for the sake of a more readable notation).

(1) Attaching  to each $D_i$ a positive integer $m_i \geq 1$, we obtain a {\bf complex orbifold}, if moreover
$ D =  D_1 \cup \dots \cup  D_r \cup Sing(Z)$.

(2) The {\bf orbifold fundamental group} $\pi_1^{orb} (Z \setminus D, (m_1, \dots, m_r))$ is defined as the quotient 
$$ \pi_1^{orb} (Z \setminus D, (m_1, \dots, m_r) : = \pi_1 (Z \setminus D) / \langle \langle(\ga_1^{m_1}, \dots, \ga_r^{m_r} \rangle \rangle$$

of the fundamental group of $(Z \setminus D)$ by the subgroup normally generated by simple geometric loops $\ga_i$ going each around a smooth point of the divisor $D_i$ (and counterclockwise). 

(3) The orbifold is said to be {\bf quasi-smooth} or geometric  if moreover $D_i$ is smooth outside of $Sing(Z)$.

(4) The orbifold is said to be a {\bf Deligne-Mostow orbifold} if moreover for each point $z \in Z$
there exists a local chart $\phi : \Omega  \ra U = \Omega/G$, where $0 \in \Omega \subset \CC^n$,
$G$ is a finite subgroup of $GL(n, \CC)$, $\phi(0) = z$, $U$ is an open neighbourhood of $z$, and
the orbifold structure is induced by the quotient map. That is, $D \cap U$ is the branch locus of $\Phi$,
and the integers $m_i$ are the ramification multiplicities. 

(5) We identify two orbifolds under the equivalence relation generated by forgetting the divisors $D_i$ with multiplicity $1$.

\end{defin}

\begin{remark}
(i) A D-M (= Deligne-Mostow) orbifold is quasi-smooth, and then $Z$ has only quotient singularities, which are rational singularities.

(ii) In the case where there is no divisorial part, and we have an orbifold, then the orbifold fundamental group
is the fundamental group of $Z \setminus Sing (Z)$.

(iii) If $Z = M/ \Ga$ is the quotient of a complex manifold $M$ by a properly discontinuous subgroup $\Ga$, then
$Z$ is a D-M orbifold, by Cartan's lemma (\cite{cartan}), saying that the action of the stabilizer sungroup
becomes linear after a local change of coordinates.

(iv) one could  more generally consider a wider class of orbifolds allowing  also the multiplicity $m_i = \infty$: this means that the relation $\ga_i^{m_i}=1$
is a void condition.

(v) replacing  $D$ by its  intersection with the union of the singular locus with  the divisorial components does not change the
orbifold  fundamental group.

\end{remark}

Thanks to the extension by Grauert and Remmert \cite{g-r} of Riemann's existence theorem to finite holomorphic maps
of normal complex spaces, we have that to a subgroup of the orbifold fundamental group corresponds 
an {\bf orbifold covering} of orbifolds, that is (see for instance  \cite{d-m})

\begin{itemize}
\item
a finite holomorphic map 
$$ f : \sZ : = (Z, D (m_1, \dots, m_r)) \ra \sW : = (W,B (n_1, \dots, n_s))$$ such that
\item
$f$ induces an \'etale (unramified) map $F : Z \setminus D \ra W \setminus B$,
\item
for each $Z_i$, $ f (Z_i) = B_j$ for some $j$, and locally $\ga_i \mapsto \de_j^{a_i}$,
where $ n_j = a_i m_i$, and $\de_j$ is a simple loop around $B_j$.
\item
$f^{-1}(B_j) $ is set theoretically a union of divisors $D_i$ (keep in mind here the equivalence relation explained in (5)).
\end{itemize} 

To the trivial subgroup corresponds the orbifold universal cover 
$$( \tilde{Z}, \tilde{D}, \{ \tilde{m}_j \}).$$

\begin{defin}
We say that an  orbifold $(Z, D (m_1, \dots, m_r))$ is an orbifold classifying space if its 
universal covering $( \tilde{Z}, \tilde{D}, \{ \tilde{m}_j \})$
satisfies the properties 

(a) either $\tilde{Z}$ is contractible and the multiplicities $\tilde{m}_j $ are all equal $1$, or

(b) there is a homotopy retraction of $\tilde{Z}$ to a point which preserves the subdivisor $\tilde{D}'$
consisting of the irreducible components with multiplicity $\tilde{m}_j >1$.
\end{defin} 

\begin{remark}

We end  now  this section showing two simple   examples of an orbifold which is not a Deligne-Mostow orbifold.

(1) We just take as orbifold space $Z= \CC^2$ and as divisors three distinct  lines $L_1, L_2, L_3$ through the origin,
marked with multiplicities $m_1, m_2, m_3$. 

If this orbifold were an orbifold $ \CC^2 / G$, since  $\CC^2$ is simply connected, then $G$ would be isomorphic to
the orbifold fundamental group $\pi$ of $Z$.

But $\pi$ (see for instance page 140 of \cite{locfund}) has a presentation

$$ \pi : = \langle \ga_0,  \ga_1,  \ga_2,  \ga_3 | [\ga_0, \ga_i]=1,{\rm for} \  i=1,2,3, \ga_0 = \ga_1  \ga_2  \ga_3 \rangle.$$

Since $\pi \cong G$ is finite, also $\pi /  \langle \ga_0  \rangle$ is finite. However this quotient is finite if and only if
$$ \frac{1}{m_1} + \frac{1}{m_2} + \frac{1}{m_3} > 1,$$
since the corresponding orbifold covering of $\PP^1$ branched in three points must be simply connected,
hence equal to $\PP^1$; and this is equivalent to the above inequality which is only satisfied for
the Platonic triples $(2,2,n), (2,3,3),(2,3,4),(2,3,5)$.

(2) Another easy example is given by a normal surface singularity which is not a quotient singularity, for instance an elliptic surface singularity.
\end{remark}
\section{Euclidean cristallographic groups and Actions of a finite group $G$ on a complex torus $T$}

For the reader's benefit, we repeat some results on complex cristallo graphic groups, as exposed in \cite{cat-corv}.

\begin{defin}\label{cristall}
(i) A group $\Ga$ is an abstract Euclidean cristallographic group if there exists an exact
sequence of groups
$$ (*) \ \ 0 \ra \Lam \ra \Ga \ra G \ra 1 $$
such that
\begin{enumerate}
\item
$G$ is a  finite group
\item
$\Lam$ is free abelian (we shall denote  its rank by $r$)
\item
Inner conjugation $ Ad : \Ga \ra  Aut (\Lam) $ has Kernel exactly $\Lam$,
hence $ Ad$  induces an embedding, called {\bf Linear part},
$$ L : G \ra GL (\Lam) : = Aut (\Lam) .$$
\end{enumerate}

(ii) An {\bf affine realization defined over a field $ K \supset \ZZ$ }  of an abstract Euclidean cristallographic group $\Ga$  is a homomorphism
(necessarily injective) 
$$\rho : \Ga \ra Aff (\Lam \otimes_{\ZZ} K)$$ such that 

[1] $\Lam$ acts by translations on $ V_K := \Lam \otimes_{\ZZ} K$,  $ \rho(\la) (v) =   v + \la$,

[2]  for any $\ga$ a lift of $g \in G$ we have:
$$  V_K \ni v \mapsto \rho(\ga) (v) =  Ad (\ga) v + u_{\ga} = L(g) v + u_{\ga}, \ {\rm for \ some } \ \ u_{\ga} \in  V_K.$$ 
\end{defin}

For the following Theorems, see \cite{cat-corv}: observe that the unicity of the affine realization was proven by Bieberbach in 1912 (\cite{bieb2})(who proved also many other deeper results).

\begin{theo}\label{affinereal}
Given an abstract Euclidean cristallographic group there exists an affine realization, for each    field $K \supset \ZZ$,
and its  class is unique.

\end{theo}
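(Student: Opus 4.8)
The plan is to translate the statement into the language of group extensions and the vanishing of group cohomology. Fix a field $K \supset \ZZ$; since $K$ has characteristic zero it contains $\QQ$, so $V_K := \Lam \otimes_\ZZ K$ is a $\QQ$-vector space, hence a module over $\QQ[G]$ via the linear part $L$. The input we exploit is the standard fact that for a finite group $G$ and a $\QQ[G]$-module $M$ one has $H^i(G,M) = 0$ for all $i \geq 1$: multiplication by $|G|$ annihilates $H^i(G,M)$ for $i \geq 1$ and is also invertible on $M$, hence on $H^i(G,M)$. We will use this for $i = 2$ (existence) and $i = 1$ (uniqueness).

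For existence, choose a set-theoretic section $\sigma : G \to \Gamma$ of $\pi : \Gamma \to G$ with $\sigma(1) = 1$. Then every element of $\Gamma$ is uniquely $\lam\,\sigma(g)$, one has $\sigma(g)\,\lam\,\sigma(g)^{-1} = L(g)\lam$ for $\lam \in \Lam$, and $\sigma(g)\sigma(h) = c(g,h)\,\sigma(gh)$ defines a cocycle $c \in Z^2(G,\Lam)$ whose class is that of the extension $(*)$. One looks for $\rho$ of the form $\rho(\lam\,\sigma(g)) = (\lam + v(g),\, L(g)) \in V_K \rtimes \GL(V_K) = \Aff(V_K)$, for a function $v : G \to V_K$ to be found. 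A direct computation in the semidirect product shows that this $\rho$ is a group homomorphism exactly when $c = -\,dv$ in $C^\bullet(G,V_K)$; it is then automatically injective, since the linear part of any $\gamma \in \ker\rho$ is trivial, forcing $\pi(\gamma) = 1$ because $L$ is injective, and then $\gamma = 0$ because $\Lam \hookrightarrow V_K$. As the image of $[c]$ in $H^2(G,V_K)$ vanishes, such a $v$ exists, and conditions [1], [2] of Definition \ref{cristall} are visibly satisfied.

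For uniqueness of the class, let $\rho, \rho'$ be two affine realizations. Condition [2] forces both to have linear part $L \circ \pi$, so $\rho(\gamma) = (u_\gamma, L(\bar\gamma))$ and $\rho'(\gamma) = (u'_\gamma, L(\bar\gamma))$ with $u, u' : \Gamma \to V_K$ crossed homomorphisms for the action through $L \circ \pi$; by [1] they coincide on $\Lam$, so $w := u' - u$ descends to a crossed homomorphism $G \to V_K$, i.e. an element of $Z^1(G, V_K)$. Since $H^1(G, V_K) = 0$, we may write $w(g) = t - L(g) t$ for some $t \in V_K$, which is precisely the assertion that $\rho' = \tau_t \circ \rho \circ \tau_t^{-1}$, $\tau_t \in \Aff(V_K)$ being the translation by $t$. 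Thus $\rho$ and $\rho'$ are conjugate --- in fact by a translation --- which is the claimed unicity.

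The step to handle with care --- and essentially the only obstacle --- is the cohomological bookkeeping: the $G$-action on $\Lam$ defining the class of $(*)$ in $H^2(G, \Lam)$, the $G$-action on $V_K$ in the vanishing theorem, and the $G$-action implicit in $\Aff(V_K) = V_K \rtimes \GL(V_K)$ must all be $L$, and the inclusion $\Lam \hookrightarrow V_K$ must be $G$-equivariant. The latter holds because $\Gamma$ acts on $\Lam \subset \Gamma$ by $Ad$, which restricted to $\Lam$ factors through $G$ as $L$. With these identifications fixed, both halves are formal consequences of $H^1(G, V_K) = H^2(G, V_K) = 0$; the remaining work --- the explicit cocycle identities, injectivity of $\rho$, and the conjugation computation inside $\Aff(V_K)$ --- is routine semidirect-product algebra.
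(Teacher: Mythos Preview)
Your argument is correct and is precisely the standard cohomological proof: existence from $H^2(G,V_K)=0$, uniqueness from $H^1(G,V_K)=0$, both holding because $|G|$ is invertible in the characteristic--zero field $K$. One cosmetic point: with the convention you set up, the homomorphism condition reads $c(g,h)=v(g)+L(g)v(h)-v(gh)=(dv)(g,h)$, not $-dv$; this does not affect the argument, since all that matters is that $[c]$ vanishes in $H^2(G,V_K)$.

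As for comparison: the paper does not actually prove this theorem. It is stated with a reference to \cite{cat-corv} for the full statement and to Bieberbach \cite{bieb2} for the uniqueness part, so there is no in--text proof to compare against. The argument in \cite{cat-corv} is the same cohomological one you give (vanishing of $H^1$ and $H^2$ of a finite group with coefficients in a rational representation), so your approach coincides with the intended one. Your observation that the two realizations are in fact conjugate by a \emph{translation}, not merely by an arbitrary affine map, is the sharp form of uniqueness and is exactly what the $H^1$ computation yields.
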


Assume that we have the  action of a finite group $G'$ on  a complex torus 
 $ T= V / \Lam'$, where $V$ is a complex vector space, and $ \Lam' \otimes_{\ZZ} \RR \cong V$.
 
   Since every holomorphic map between complex tori lifts to a complex  affine map
 of the respective universal covers, we can attach  to the group $G'$  the group $\Ga$  of (complex) affine transformations
 of $V$ which are  lifts of  transformations of the group $G'$.

 Again one easily sees (\cite{cat-corv}:
 \begin{prop}\label{notranslations}
 $\Ga$ is an Euclidean cristallographic group, via the  exact sequence 
 $$0 \ra \Lam \ra \Ga \ra G \ra 1  $$
where  $\Lam  \supset  \Lam'$  is the  lattice in $V$ such that  $\Lam: = Ker (Ad), \ Ad : \Ga  \ra GL  (\Lam')$,
and  $ G \subset Aut (V/ \Lam)$
 contains no translations.
  \end{prop}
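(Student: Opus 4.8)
The plan is to construct $\Gamma$ explicitly from the lifts of the elements of $G'$, isolate its subgroup of pure translations as the lattice $\Lambda$, verify the three conditions of Definition \ref{cristall}, and finally check that the resulting point group $G$ contains no translations of $V/\Lambda$.

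First I recall that every holomorphic self-map of a complex torus lifts to a complex affine map of its universal cover $V$. Collecting, for every $g' \in G'$, all affine maps of $V$ that lift the action of $g'$ on $T = V/\Lam'$, one obtains a group $\Gamma$ of affine transformations $v \mapsto A_\gamma v + b_\gamma$: composites and inverses of lifts are lifts, and the lifts of $\id_T$ are exactly the translations by $\Lam'$. Thus $\Lam'$ is a subgroup of $\Gamma$ and $\Gamma/\Lam' \cong \im(G' \to \Aut(T))$, a finite group; in particular $\Lam'$ has finite index in $\Gamma$, so $\Gamma$ is finitely generated (and, since $\Lam'$ acts properly discontinuously on $V$, so does $\Gamma$, although this is not needed for Definition \ref{cristall}).

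Next I consider the linear-part homomorphism $\ell : \Gamma \to GL(V)$, $\gamma \mapsto A_\gamma$. Since every automorphism of $T$ acts on $\pi_1(T) = \Lam'$ as a lattice automorphism, one has $A_\gamma(\Lam') = \Lam'$ for all $\gamma$; hence $\Lam'$ is normal in $\Gamma$, the inner conjugation $Ad : \Gamma \to GL(\Lam')$ is given by $\gamma \mapsto A_\gamma|_{\Lam'}$, and $\ker(Ad) = \ker(\ell)$ because $A_\gamma$ is determined by its restriction to the $\RR$-spanning set $\Lam'$. I then set $\Lam := \ker(Ad) = \ker(\ell)$, which, identified via $t_b \mapsto b$ with a subgroup of $(V,+)$, contains $\Lam'$ and consists of all $b \in V$ with $v \mapsto v+b$ in $\Gamma$. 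Being a finitely generated torsion-free abelian group containing $\Lam' \cong \ZZ^{2n}$ with finite index, $\Lam$ is free abelian of rank $2n$, i.e. a full lattice in $V$; this already gives conditions (2) and (3) of Definition \ref{cristall}, with $L : G := \Gamma/\Lam \hookrightarrow GL(\Lam)$ the induced embedding (note $A_\gamma(\Lam) = \Lam$ since $\Lam$ is normal in $\Gamma$).

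It remains to produce the exact sequence and the no-translations property. Because $A_\gamma(\Lam) = \Lam$, each $\gamma$ descends to an affine automorphism of $V/\Lam$, defining $\bar\rho : \Gamma \to \Aut(V/\Lam)$ with image $G$. If $\gamma \in \ker(\bar\rho)$ then $A_\gamma v + b_\gamma \equiv v \pmod{\Lam}$ for all $v$; putting $v = 0$ gives $b_\gamma \in \Lam$, and then $v \mapsto A_\gamma v - v$ is a continuous $\Lam$-valued map on the connected space $V$, hence constant $=0$, so $A_\gamma = I$ and $\gamma \in \Lam$. Thus $0 \to \Lam \to \Gamma \to G \to 1$ is exact, and $G$ is finite as a quotient of $\Gamma/\Lam'$ — condition (1). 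Finally, if some $g \in G$ acted on $V/\Lam$ as a translation, any lift $\gamma$ would have $A_\gamma = I$ by the same argument, so $\gamma$ would be a pure translation and hence lie in $\Lam$, forcing $g = \id$. The only point requiring genuine care is the discreteness — hence freeness — of $\Lam$, which however is immediate once one knows $[\Gamma : \Lam'] < \infty$ together with the discreteness of $\Lam'$; everything else is routine bookkeeping with affine maps.
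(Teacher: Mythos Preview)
Your argument is correct and is precisely the expected verification: identify $\Gamma$ with the affine lifts, observe that $\Lam'\lhd\Gamma$ with finite index, set $\Lam=\ker(Ad)=\ker(\ell)$, and read off conditions (1)--(3) of Definition~\ref{cristall} together with the no-translations property via the standard ``continuous map to a discrete set is constant'' trick. The paper itself does not spell out a proof of this proposition; it simply writes ``one easily sees'' and refers to \cite{cat-corv}, so there is nothing to compare against beyond noting that your write-up is the natural unpacking of that reference.

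One minor remark: in verifying condition~(3) you should be aware that Definition~\ref{cristall} asks for $Ad:\Gamma\to\Aut(\Lam)$ (not $\Aut(\Lam')$) to have kernel exactly $\Lam$; you implicitly use that $\ker\bigl(Ad:\Gamma\to GL(\Lam')\bigr)=\ker\bigl(Ad:\Gamma\to GL(\Lam)\bigr)=\ker(\ell)$, which is immediate since both $\Lam'$ and $\Lam$ span $V$ over $\RR$. It would do no harm to say this explicitly.
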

 
 Hence the datum of  the action of a finite group  $G$  on a complex torus $T$, containing no translations, is equivalent to giving:

\begin{enumerate}
\item
a cristallographic group $\Ga$
\item
 a complex structure $J$ on the real vector space $V_{\RR}$ which makes the action of $G$ complex  linear. 
\end{enumerate}

The complex structure $J$ exists if and only if $\Ga$ is even, according to the following definition:

\begin{defin}\label{even}
A cristallographic group $\Ga$ is said to be {\bf even} if:

i)  $\Lam$ is a free abelian group  of even rank $ r = 2n$

ii) considering the associated   faithful representation $$ G \ra Aut (\Lam \otimes \CC),$$  for each real irreducible representation $\chi$ of $G$, 
 {\bf $ (\Lam \otimes \CC)_{ \chi}$ has even complex  dimension}.

\end{defin}

 \section{Proof of the Main Theorems}
 
\subsection{Properties of $X=T/G$ and proof of the easier implications}

I)  Let $X = T/G$ be as above the quotient orbifold of a torus by the action of a finite group.
Since the universal covering of $T = V / \Lam$ is the vector space $V$, which is contractible,
 and $ X = V / \Ga $, where $\Ga : = \pi_1^{orb} (X) $  we obtain that $X$ is an orbifold classifying space.
Moreover, that  $\Ga$ is an Euclidean cristallographic group follows from proposition \ref{notranslations}, 
 and $\Ga$ is even since there is a $G$-invariant complex structure.

II) Consider now  the normal subgroup $G^{pr} <G$ generated by the pseudoreflections, that is, the linear maps which have
only one eigenvalue $\neq 1$. 

We have in particular a factorization
$$ T \ra  W: = T/ G^{pr} \ra X = T/G, $$
where the second map is quasi-\'etale, that is, ramified only in codimension at least $2$.

At any point $ t \in T$ having a nontrivial stabilizer $G_t < G$, we have similarly  a corresponding normal subgroup
$G_t^{pr}$, generated by the pseudoreflections in $G_t$. By Chevalley's Theorem, the local quotient
of $T$ at $t$ by $G_t^{pr}$ is smooth, and then we have the further quotient by $G_t / G_t^{pr}$.

In particular, $X$ is a Deligne-Mostow orbifold and its singularities are quotient singularities.

By \cite{k-m} (Prop. 5.15, page 158)  quotient singularities $(X,x)$ are rational singularities, that is, they are normal and, if $f : Z \ra X$ is a local resolution, 
then $\sR^if_* \hol_Z=0 $ for $i \geq 1$. They enjoy also the stronger property of being KLT (Kawamata Log Terminal)
singularities.

Indeed Prop. 5.22 of \cite{k-m} (where dlt=KLT if there is no boundary divisor $\De, \De'$) says that KLT singularities are rational singularities, while Prop. 5.20, page 160, says that if we have a finite morphism between normal varieties,
$ F : T \ra X$, then {\bf $X$ is KLT if and only if $T$ is KLT)}.

III) If $T$ is projective, then also $X$ is projective, since, by averaging,  we can find on $T$ a $G$-invariant very ample divisor.

\subsection{The K\"ahler property}
  IV) In the general case where the torus $T$ is not projective, but only K\"ahler, 
   to show that $X$ enjoys some  K\"ahlerian properties, we need to recall some results
  by Fujiki and others (which could also be used to slightly vary the hypotheses in our results).
  
  As already mentioned in the Introduction, Grauert \cite{grauert} defined the 
concept of a K\"ahler complex space, and later Varouchas \cite{varouchas1} proved that if $X \ra Y$ is a surjective
holomorphic map with $Y$ reduced, and $X$ K\"ahler, then $Y$ is bimeromorphic to a K\"ahler manifold.

  Instead  Fujiki in \cite{fujiki-aut} (see also \cite{fujiki-nag} and \cite{ueno}),  introduced the concept of a complex space $X$  bimeromorphic to a K\"ahler manifold, and of a 
 complex space 
 in the class $\sC$, meaning  that $X$  is dominated by a holomorphic surjective map from a
 K\"ahler space $X'$, equivalently, from a cKM $X'$.
  
 Here the first  basic   result in this line of thought:
 1.5 of \cite{ueno} implies that: 
 
 {\bf given  a generally  finite map $ Y \ra X$, $X$ is in $\sC$ if and only if $Y \in \sC$}
 
   (in \cite{catAV}, sections 17-1.9 was observed the easier result that if a compact complex manifold $Z$ has a generically finite map to a cKM $M$, then $Z$ is bimeromorphic to a K\"ahler manifold: this evidently also holds if $Z$ is a complex space).
 
 Fujiki also asked (remark 4.4 , page 35 of \cite{fujiki-rims}) whether manifolds in the class $\sC$ are just those which are bimeromorphic to a K\"ahler manifold: his conjecture was shown  to be true by Varouchas \cite{varouchas1},  \cite{varouchas2}. 
 
 Whence, 
 
{\bf   (I) the quotients $X=T/G$ are bimeromorphic to a K\"ahler manifold.
 
Moreover   

(II) the quotients $X=T/G$ are also K\"ahler complex spaces}  since  \cite{varouchas2} proved that if $W \ra X$
is proper and open, with $X$ normal, from the property that $W$ is a K\"ahler space follows that also $X$ is K\"ahler.

 Since, again for instance by \cite{varouchas1}, property (II) implies property (I), in  our Theorems we opt for
 assuming only Property (I). 
 
 Finally, a crucial result is that (see Prop. 1.3 of \cite{ueno}) if a compact complex  manifold $M$  is in the class $\sC$,
 then the cohomology of $M$ admits a Hodge decomposition, and in particular every holomorphic form is d-closed, and 
 there is an Albanese map $ \al : M \ra Alb(M)$, such that $\al^* : H^1 (Alb(M),\CC) \ra  H^1 (M,\CC)$
 is an isomorphism.

 \subsection{Proof of Theorem \ref{proj}.} We need to show the converse implication.
 {\bf Key argument: we consider the orbifold covering $Y$ associated to the normal subgroup 
 $$ \Lam < \Ga : = \pi_1^{orb}(X),$$
 and we show that $Y$ is a complex torus (here projective because $X$ is projective and $Y$ 
 has a finite map to $X$).}
 
 \begin{lemma}
 The orbifold $Y$ is just a normal complex space, that is,  there are no marked divisors with  multiplicity $m_i \geq 2$.
 \end{lemma}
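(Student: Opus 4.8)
The plan is to reduce the Lemma to the \emph{developability} of the orbifold $X$ --- i.e.\ to the assertion that the local orbifold groups of $X$ inject into $\Ga:=\pi_1^{orb}(X)$ --- and then to exploit that $\pi_1^{orb}(Y)$ is torsion free. Recall that $Y$ is the orbifold covering attached to the normal subgroup $\Lam<\Ga$, so $f\colon Y\to X$ is finite and Galois with group $G=\Ga/\Lam$ and $\pi_1^{orb}(Y)\cong\Lam$, which is free abelian, hence torsion free. The orbifold universal covering of $Y$ coincides with that of $X$, and a covering of a developable orbifold is developable; so it is enough to prove that $X$ is developable. Granting that, the orbifold universal covering $\tilde Y$ of $Y$ is a complex manifold on which $\Lam$ acts properly discontinuously, and being torsion free $\Lam$ acts freely; hence $Y=\tilde Y/\Lam$ is a complex manifold with trivial orbifold structure, which is exactly the assertion. (Equivalently: the local groups of a developable orbifold embed in its orbifold fundamental group, so here they would be finite subgroups of $\Lam$, hence trivial.)

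It remains to prove that $X$ is developable, and this is where the hypothesis that $X$ is an orbifold classifying space, together with the crystallographicity of $\Ga$, enters. Since $\Ga$ is an even Euclidean cristallographic group, Theorem \ref{affinereal} provides a properly discontinuous affine realization $\Ga\hookrightarrow\Aff(\CC^n)$ in which $\Lam$ acts by translations; being the quotient of the manifold $\CC^n$ by a properly discontinuous group, $X':=\CC^n/\Ga$ is a Deligne--Mostow orbifold (Cartan's lemma) with $\pi_1^{orb}(X')=\Ga$ whose orbifold universal covering is the smooth contractible space $\CC^n$ with trivial orbifold structure --- so $X'$ is a \emph{developable} orbifold classifying space for $\Ga$. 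On the other hand the classifying-space hypothesis says that the orbifold universal covering $\tilde Z$ of $X$ is contractible (in case (a) directly; in case (b) via the retraction to a point), so $X$ is an aspherical orbifold with the same orbifold fundamental group $\Ga$. For an aspherical orbifold the local groups are, up to conjugacy, exactly the finite subgroups of $\pi_1^{orb}$ realized along the covering, so comparing $X$ with the developable model $X'$ forces the local groups of $X$ to be conjugates of the point stabilizers of the $\Ga$-action on $\CC^n$, which embed in $\Ga$. Hence $X$ is developable, and the Lemma follows.

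I expect the genuine obstacle to be precisely this last step: passing from ``$X$ is an orbifold classifying space for the crystallographic group $\Ga$'' to ``$X$ is developable'' (equivalently, its orbifold universal covering is a manifold with trivial orbifold structure). This is the only point where the classifying-space assumption is really used, as opposed to merely knowing that $\pi_1^{orb}(X)$ is crystallographic; in particular one must make sure that case (b) of the definition cannot produce marked components of $\tilde Z$. Moreover, in the KLT setting of Theorem \ref{proj}(ii) this argument has to be preceded by a reduction showing that such an $X$ has only quotient singularities, so that local orbifold charts --- hence local groups --- are available in the first place.
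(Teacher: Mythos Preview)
Your argument takes a long detour through developability of $X$ and ends by concluding that $Y$ is a \emph{manifold}. That is both more than the Lemma claims and more than is actually true: the Lemma only asserts that the orbifold structure on $Y$ has no divisors marked with multiplicity $\ge 2$; $Y$ may perfectly well be singular (and indeed in the D--M case it has quotient singularities, which is what the paper uses immediately afterwards). So the target of your reduction is misidentified.

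The paper's proof is far more elementary and avoids developability entirely. It works directly at the level of the exact sequence $1\to\Lam\to\Ga\to G\to 1$ and the definition of an orbifold covering. The geometric loop $\ga_i$ around $D_i$ satisfies $\ga_i^{m_i}=1$ in $\Ga$, so it has finite order $m_i$; since $\Lam$ is torsion free, any power of $\ga_i$ lying in $\Lam$ must be trivial, hence the image of $\ga_i$ in $G=\Ga/\Lam$ has order exactly $m_i$. But the ramification index of the Galois cover $Y\to X$ over $D_i$ is precisely the order of this image, so it equals $m_i$, and by the orbifold-covering relation $m_i=a_i\cdot m_i'$ each component of $f^{-1}(D_i)$ carries multiplicity $m_i'=1$. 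That is the whole proof.

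Your worry about the KLT case (no local charts, hence no local groups) evaporates with this approach: the argument uses only $\pi_1^{orb}$, the torsion-freeness of $\Lam$, and the ramification bookkeeping for orbifold coverings, none of which requires local quotient charts. The one genuine subtlety you flagged---whether $\ga_i$ has order \emph{exactly} $m_i$ in $\Ga$ rather than a proper divisor---is where the classifying-space hypothesis enters (case~(a) forces the universal cover to carry no marked divisors, hence $d_i=m_i$); the paper takes this as read. That is a fair point to keep in mind, but it does not justify the machinery you invoke.
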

 
 \begin{proof}
 Consider the exact sequence
 $$ 1 \ra \Lam \ra \Ga \ra G \ra 1.$$
 
 Then the generators $\ga_i$ have finite order $m_i$, hence their image in $G$ has order exactly $m_i$, because
 $\Lam$ is torsion free.
 
 This means that the covering $ Y \ra X$ is ramified with multiplicity $m_i$ at the divisor $D_i$, and therefore their inverse image
 in $Y$ is a reduced divisor with multiplicity $1$.
 
 \end{proof}
 
 In case (i), since $X$ is a   Deligne-Mostow orbifold, then also $Y$ is a D-M orbifold, hence it is a normal space with  quotient singularities, and these are rational singularities.
 
 Let $Y'$ be a resolution of $Y$. Since $Y$ has rational singularities, $\sR^1f_* (\ZZ_{Y'})=0$ and we have an isomorphism
 $$ H^1(Y', \ZZ) \cong H^1(Y, \ZZ) \cong \ZZ^{2n}.$$
 
 Hence  the Albanese variety of $Y'$ is a complex torus of dimension $n$, and the Albanese map
 $\al' : Y' \ra T: = Alb(Y')$ factors through $Y$, and $\al : Y \ra T$ is a homotopy equivalence,
 in particular it has degree $1$ (because it induces an isomorphism of $H^{2n}(T, \ZZ) \cong H^{2n}(Y, \ZZ)$).
 
 We follow a similar argument to the one used in \cite{nankai}, proof of Proposition 4.8: it suffices to show 
 that $\al$ is  finite, because $\al$,  being finite and birational,  is then an isomorphism $ Y \cong T$ by normality.

  Now, since $\al$ is birational, by Zariski's Main Theorem (the Hartogs property in the case of normal complex spaces) $\al$ is an isomorphism unless there is a divisor $D$ which is contracted 
  by $\al$. And, since $H^{j}(T, \ZZ) \cong H^{j}(Y, \ZZ)$, the class of $D$ is trivial in $H^{2}(Y, \ZZ)$, and a fortiori
  its pull back $D'$ to $Y'$ is trivial.
  
 This is a contradiction since, $Y'$ being K\"ahler, 
 the class of $D'$ cannot be trivial.
 
  In case (ii) the proof is identical, we need only to   establish that $Y$ has rational singularities.
  
  As already discussed, if $X$ has KLT singularities, by Prop. 5.20 of \cite{k-m} also $Y$ has KLT 
  singularities, which are rational singularities.
  
  \subsection{Proof of Theorems \ref{kahler} and \ref{3}}The proof is essentially the same.
  
  Indeed, $X$ is assumed to be bimeromorphic to a K\"ahler manifold, that is, in the class $\sC$, and by the results of Fujiki, Ueno and Varouchas also $Y$ is bimeromorphic to a K\"ahler manifold $Y'$, that we can assume to dominate $Y$.
  
  By our assumption $Y$ has again rational singularities, and we may
   consider again the Albanese map $\al' : Y' \ra T : = Alb (Y')$, which again factors through a birational map
   $\al : Y \ra T$. We derive the same contradiction.

  \begin{remark}
  One may ask whether one can replace the condition of KLT singularities for $X$ by the condition
  that $X$ has rational singularities, proving  then that also $Y$ has rational singularities.
  \end{remark}

 \section{Parametrizing Families}
 
 We simply observe now, as in \cite{cat-corv},  how these orbifolds are parametrized by a finite union of  connected complex manifolds,
  which are just 
    products of Grassmann manifolds.
 
 The  connected component $\sT_n$ of the Teichm\"uller space of  $n$-dimensional complex tori  (see  \cite{nankai}, 
\cite{cat04} and \cite{handbook} )
is  the open set $\sT_n$ of the complex
Grassmann Manifold $Gr(n,2n)$, image of the open set of matrices

$\sF : = \{ \Omega \in \Mat(2n,n; \CC) \ | \ i^n det  (\Omega \overline
{\Omega}) > 0 \}.$

Over $\sF$ lies the following tautological family of complex tori:    consider
a fixed lattice $ \Lam  : =  \ZZ^{2n}$, and associate
to each matrix $ \Omega $ as above   the subspace $V$ of $\CC^{2n} \cong \Lam \otimes \CC$ given as
$$ V : =  \Omega \CC^{n},$$ so that
$ V \in Gr(n,2n)$ and $\Lam \otimes \CC \cong V \oplus \bar{V}.$

To  $ V $ we associate then the  torus 
$$T_V : = V / p_V (\Lam) = (\Lam \otimes \CC)/ (\Lam \oplus  \bar{V} ),$$
$p_V : V \oplus
\bar{V} \ra V$ being the projection onto the first summand.

The crystallographic group $\Ga$ determines an action of $G \subset SL (2n, \ZZ)$ on $\sF$ and on $\sT_n$,
obtained by multiplying the matrix  $\Omega$ with matrices $g \in G$ on the right.

We define then $\sT_n^G$ as the locus of fixed points for the action of $G$. If $V \in \sT_n^G$, then 
$G$ acts as a group of biholomorphisms of $T_V$, and we associate then to such a $V$ the orbifold
$$ X_V : =  T_V / G.$$

  We see as in \cite{cat-corv} that  $\sT_n^G$ consists of  a finite number of  components, indexed by the Hodge type of the Hodge decomposition.

 \bigskip



\begin{thebibliography}{Grif-SchmX}



\bibitem[BdF08]{BdF}
Bagnera, G.; de Franchis, M.
{\em Le superficie algebriche le quali ammettono una rappresentazione parametrica mediante funzioni iperellittiche di due argomenti. }
Mem. di Mat. e di Fis. Soc. It. Sc. (3) 15, 253--343 (1908).

 \bibitem[BCF14]{bcf}
 Bauer, I.,  Catanese, F., Frapporti, D.
{\em  Generalized Burniat type surfaces and Bagnera-de Franchis varieties}, J. Math. Sci. Univ. Tokyo {\bf 22} (2015), 55-111.

\bibitem[Bieb11]{bieb1}
Bieberbach, L.
{\"Uber die Bewegungsgruppen der euklidischen R\"aume. (Erste Abhandlung.) }
Math. Ann. 70, 297--336 (1911).

\bibitem[Bieb12]{bieb2}
Bieberbach, L.
{\"Uber die Bewegungsgruppen der euklidischen R\"aume. (Zweite Abhandlung.) Die Gruppen mit einem endlichen Fundamentalbereich. }
Math. Ann. 72, 400--412 (1912).

\bibitem[Blan56]{blanchard}
Blanchard, Andr\'e
{\em Sur les vari\'et\'es analytiques complexes. }
Ann. Sci. \'Ec. Norm. Sup\'er., III. S\'er. 73, 157--202 (1956).

\bibitem[Cam91]{campana}
Campana, Frederic
{\em On twistor spaces of the class $\sC$.}
J. Differ. Geom. 33, No. 2, 541--549 (1991).

\bibitem[Cart57]{cartan}
Cartan, Henri
{\em Quotient d?un espace analytique par un groupe d'automorphismes. }
Princeton Math. Ser. 12, 90--102 (1957).

 \bibitem[Cat95]{catAV}
     Catanese, Fabrizio 
     {\em Compact complex manifolds bimeromorphic to tori. }, in `Abelian varieties' (Egloffstein, 1993), 55--62, de Gruyter, Berlin, (1995).
    
    \bibitem[Cat00]{isogenous} 
     Catanese, Fabrizio
 {\em Fibred surfaces, varieties isogenous to a product and related moduli spaces. }
Am. J. Math. 122, No. 1, 1--44 (2000).

\bibitem[Cat02]{nankai}
Catanese, Fabrizio 
{\em Deformation types of real and complex manifolds.}
 Contemporary trends in algebraic geometry and algebraic topology (Tianjin, 2000), 195--238, Nankai Tracts Math., 5, World Sci. Publ., River Edge, NJ,(2002).

\bibitem[Cat04]{cat04}
Catanese, Fabrizio 
{\em Deformation in the large of some complex manifolds, I} Ann. Mat. Pura Appl. (4)
183,  Volume in Memory of Fabio Bardelli, (2004), no. 3, 261--289.

\bibitem[Cat06]{locfund}
Catanese, Fabrizio
{\em Surface classification and local and global fundamental groups. }
Atti Accad. Naz. Lincei, Cl. Sci. Fis. Mat. Nat., IX. Ser., Rend. Lincei, Mat. Appl. 17, No. 2, 135--153 (2006).

\bibitem[Cat08]{cime}
Catanese, Fabrizio
{\em Differentiable and deformation type of algebraic surfaces, real and symplectic structures. }
Catanese, Fabrizio (ed.) et al., Symplectic 4--manifolds and algebraic surfaces. Lectures given at the C.I.M.E. summer school, Cetraro, Italy, September 2--10, 2003. Berlin: Springer; Florence: Fondazione C.I.M.E  Lecture Notes in Mathematics 1938, 55--167 (2008).



\bibitem[Cat13]{handbook}
Catanese, Fabrizio,
{\em A superficial working guide to deformations and moduli. } 
Farkas, Gavril (ed.) et al., Handbook of moduli. Volume I. Somerville, MA: International Press; Beijing: Higher Education Press. Advanced Lectures in Mathematics (ALM) 24, 161--215 (2013).

\bibitem[Cat15]{topmethods}
Catanese,Fabrizio,
{\em  Topological methods in moduli theory. } Bull. Math. Sci. 5 (2015), no. 3, 287-- 449.

 \bibitem[Cat-Cor17]{cat-corv}  
Catanese, Fabrizio; Corvaja, Pietro,
{\em Teichm\"uller spaces of generalized hyperelliptic manifolds. } 
Angella, Daniele (ed.) et al., Complex and symplectic geometry. Based on the presentations at the INdAM meeting "Complex and symplectic geometry", Cortona, Italy, June 12--18, 2016. Cham: Springer  Springer INdAM Series 21, 39--49 (2017). 

\bibitem[Del-Mos93]{d-m}
Deligne, Pierre; Mostow, George Daniel
{\em Commensurabilities among lattices in $\PP U(1,n)$}.
Annals of Mathematics Studies. 132. Princeton, NJ: Princeton University Press. 183 p. (1993)

\bibitem[ES09]{Enr-Sev}
Enriques, F.; Severi, F..
{\em M\'emoire sur les surfaces hyperelliptiques. }  
Acta Math. 32, 283-- 392 (1909) and 
33, 321--403 (1910).


 \bibitem[Fuj78]{fujiki-aut}
Fujiki, Akira
{\em On automorphism groups of compact K\"ahler manifolds. }
Invent. Math. 44, 225--258 (1978).

 \bibitem[Fuj78-b]{fujiki-rims}
Fujiki, Akira
{\em Closedness of the Douady spaces of compact K\"ahler spaces. }
Publ. Res. Inst. Math. Sci., Kyoto Univ. 14, 1--52 (1978).

 \bibitem[Fuj82]{fujiki-nag}
Fujiki, Akira
{\em On the Douady space of a compact complex space in the category $\sC$. }
Nagoya Math. J. 85, 189--211 (1982).

\bibitem[Gra62]{grauert}
Grauert, Hans
{\em \"Uber Modifikationen und exzeptionelle analytische Mengen. }
Math. Ann. 146, 331--368 (1962).

\bibitem[G-R58]{g-r}
Grauert, Hans; Remmert, Reinhold
{\em Komplexe R\"aume.}
Math. Ann. 136, 245--318 (1958).

\bibitem[GKP23]{gkp}
Greb, Daniel; Kebekus, Stefan; Peternell, Thomas.
{\em Miyaoka-Yau inequalities and the topological characterization of certain klt varieties}
arXiv:2309.14121, to appear in a Comptes Rend. Ac. Sci. volume in memory of jean Pierre Demailly. 

\bibitem[K-M98]{k-m}
Koll\'ar, J\'anos; Mori, Shigefumi
{\em Birational geometry of algebraic varieties. With the collaboration of C. H. Clemens and A. Corti. }
Cambridge Tracts in Mathematics. 134. Cambridge: Cambridge University Press. viii, 254 p. (1998).


 \bibitem[Lan01]{Lange}
  Lange, Herbert
{\em Hyperelliptic varieties. }
Tohoku Math. J. (2) 53 (2001), no. 4, 491--510. 

\bibitem[Som75]{sommese}
Sommese, Andrew John
{\em Quaternionic manifolds. }
Math. Ann. 212, 191--214 (1975).

\bibitem[UY76]{U-Y}
Uchida, Koji; Yoshihara, Hisao
{\em Discontinuous groups of affine transformations of $ \CC^3$}. 
Tohoku Math. J. (2) 28,  no. 1, 89--94 (1976).

\bibitem[Ueno83]{ueno}
Ueno, Kenji
{\em Introduction to the theory of compact complex spaces in the class $\sC$.}
Algebraic varieties and analytic varieties, Proc. Symp., Tokyo 1981, Adv. Stud. Pure Math. 1, 219--230 (1983).


\bibitem[Var86]{varouchas1}
Varouchas, Jean
{\em Sur l'image d'une vari\'et\'e K\"ahl\'erienne compacte. (On the image of a compact K\"ahler manifold). }
Fonctions de plusieurs variables complexes V, S\'emin. F. Norguet, Paris 1979-1985, Lect. Notes Math. 1188, 245--259 (1986).

\bibitem[Var89]{varouchas2}
Varouchas, Jean
{\em K\"ahler spaces and proper open morphisms. }
Math. Ann. 283, No. 1, 13--52 (1989).

\end{thebibliography}
\end{document}